\newcommand{\st}{~|~}
\newcommand{\Zz}{\mathbb{Z}}
\newcommand{\ra}{\rightarrow}
\newcommand{\ds}{\displaystyle}
\newcommand{\vl}{\;\vert\;}
\newcommand{\Z}{\mathbb{Z}}
\newcommand{\x}{\mathbf{x}}
\newcommand{\ls}{\leqslant}
\newcommand{\gs}{\geqslant}
\newcommand{\lst}[2]{#1_1,\dots,#1_{#2}}
\newcommand{\kk}{\Bbbk}
\newcommand{\bb}[1]{{\bf #1 }}
\newcommand{\dfn}[1]{\textsf{#1}\index{#1}}
\newcommand{\B}[1]{B{(#1)}} 
\newcommand{\lex}{\ensuremath{\text{\textsc{lex}}}\xspace}
\newcommand{\lamb}[2]{\lambda_{#1}(#2)}
\newcommand{\lam}[1]{\lambda_{#1}}
\theoremstyle{plain} 
\newtheorem{thm}{Theorem}
\numberwithin{thm}{section}
\newtheorem{prop}[thm]{Proposition}
\newtheorem{cor}[thm]{Corollary}
\newtheorem{lem}[thm]{Lemma}
\theoremstyle{definition}
\newtheorem{example}[thm]{Example} 
\newtheorem{defn}[thm]{Definition}
\theoremstyle{remark}  
\newtheorem{rem}[thm]{Remark}
\title{Sequences defined by $h$-vectors}
\author[T. Enkosky and B. Stone]{Thomas Enkosky and Branden Stone}
\address{Thomas Enkosky, U.S. Coast Guard Academy, Department of Mathematics, 15 Mohegan Ave, New London, CT, 06320 }
\email{Thomas.A.Enkosky@uscga.edu}
\address{Branden Stone, Mathematics Program, Bard College, P.O. Box 5000, Annandale-on-Hudson, NY 12504}
\email{bstone@bard.edu}
\subjclass[2010]{Primary: 05E40; Secondary: 13D40}
\begin{document}

\maketitle 

\begin{abstract}
	In this paper we consider the sequence whose $n^{th}$ term is the number of $h$-vectors of length $n$.  We show that the $n^{th}$ term of this sequence is bounded above by the $n^{th}$  Fibonacci number and bounded below by the number of integer partitions of $n$ into distinct parts. Further we show embedded sequences that directly relate to integer partitions. 
\end{abstract}

\section{Introduction}

Hilbert functions of graded rings have been well studied throughout the years and are known to relate to many different invariants such as dimension, multiplicity, and Betti numbers \cite[Chapter 4]{BH}. In 1927, Macaulay showed that for every graded ideal there exists a \lex-segment with the same Hilbert function \cite{macaulay27}. Since then a wide range of research has accumulated generalizing this result \cite{CL, FR, MM,manoj}.  These functions have a variety of uses in both algebra and combinatorics and are the subject of active research \cite{PS}. In particular, it is helpful giving necessary conditions for a ring to have the weak Lefschetz property \cite{harima03}.

In \cite{linusson99}, Linusson counted sequences and vectors associated to Hilbert functions. In particular, recursion formulas were given for the number of $M$-sequences (i.e. $f$-vectors for multicomplexes) in terms of the number of variables and a maximum degree.  When the number of variables was restricted to 3, it was shown that the Bell numbers counted the number of $M$-sequences. In recent work of Whieldon \cite{gwyn}, given certain classes of monomial ideals, the sequence of Betti numbers satisfies nice recursion formulas. In particular, the Betti numbers of the resolution of $\kk$ over $S = \kk[x,y]/(x^2,xy)$ are given by the $i^{th}$ Fibonacci number!  The goal of this paper is to find recursion formulas related to Hilbert functions.  We are mainly concerned with the sequence defined by the number of $h$-vectors of length $n$.   We show that this sequence is bounded above (term-wise) by the sequence of Fibonacci numbers and below by the number of integer partitions of $n$ into at least 2 distinct parts. As such, the sequence has exponential growth.

The rest of this section gives the necessary background and notation.  In Section~\ref{sec:fib-bound} we determine an upper bound for our sequence to be the Fibonacci numbers. The lower bound can be found in Section \ref{sec:int-part} as well as a one-to-one correspondence between integer partitions and \lex ideals in two variables. The rest of the paper generalizes these concepts.

\subsection{Basic Setup}

	We first give some necessary background on Hilbert functions and $h$-vectors.  Let $R = \kk[\lst x n]$ be a polynomial ring over a field $\kk$ with the standard grading.  In particular, $\deg x_i = 1$ for $1\ls i\ls n$.  If $I$ is a graded ideal, the quotient ring $R/I$ is also graded and we denote by $(R/I)_t$ the $\kk$-vector space of all degree $t$ homogeneous elements of $R/I$.  The \dfn{Hilbert function} $H_{R/I}~:~\Zz_{\gs 0} \ra \Zz_{\gs 0} $ is defined to be the $\kk$-vector space dimension  of each graded component, i.e. $\ds H_{R/I}(t):= \dim_k (R/I)_t$. 
	
	If the Krull dimension of the graded quotient ring is zero, there exists an $s\gs 0$ such that  $H_{R/I}(s) \not= 0$ but $H_{R/I}(t) = 0$ for all $t>s$. In this case, the \dfn{$h$-vector} of $R/I$ is defined as
	\[
		\bb h(R/I) = (H_{R/I}(0), H_{R/I}(1), H_{R/I}(2),\ldots, H_{R/I}(s)).
	\]
Thus the $h$-vector of $R/I$ has finitely many non-zero entries. The \dfn{length}  of $R/I$ is the $k$-vector space dimension of $R/I$, denoted $\lambda(R/I)$.  In particular, $\lambda(R/I)=\sum_{i=0}^s H_{R/I}(i)$. Throughout this paper, we will also refer to $\lambda(R/I)$ as the \dfn{length of $\bb h (R/I)$}.

	In \cite[Chapter 4]{BH} a numerical constraint is given on the possible integer vectors that can be $h$-vectors.  Given $d\in\Z_{\gs 0}$, each $a\in\Z_{\gs 0}$ has a unique representation as a sum of binomial coefficients
	\begin{equation}\label{eqn:binom-rep}
	  a=\binom{b_d}{d}+\binom{b_{d-1}}{d-1}+\cdots+\binom{b_j}{j},
	\end{equation}
where $\ds b_d>b_{d-1}>\cdots>b_j\gs j\gs 1$.  Further, define
	\begin{equation}\label{eqn:macaulay-coeff}
	  a^{\langle d \rangle}=\binom{b_d+1}{d+1}+\binom{b_{d-1}+1}{d}+\cdots+\binom{b_j+1}{j+1},
	\end{equation}
where $0^{\langle d \rangle} = 0$. For a map $h:\Z_{\gs 0} \to \Z_{\gs 0}$, Macaulay's Theorem \cite[Theorem 4.2.10]{BH} says the following conditions are equivalent: 
\begin{enumerate}
	\item[(A)] there exists a graded ideal $I$ in $R$ such that $H_{R/I}(t) = h(t)$ for all $t \gs 0$;
	\item[(B)] there exists a monomial ideal $I$ in $R$ such that $H_{R/I}(t) = h(t)$ for all $t \gs 0$;
	\item[(C)] one has $h(0) = 1$, and $h(t+1) \ls h(t)^{\langle t \rangle}$ for all $t \gs 1$.
\end{enumerate}

	Throughout this paper, for an arbitrary set $\Lambda$, we denote $|\Lambda|$ as the cardinality of $\Lambda$.

\section{Fibonacci Bound}\label{sec:fib-bound}

The main study of this paper is the sequence $\ds\left\{\ell(n)\right\}_{n\gs 1}$ defined by the number of $h$-vectors of length $n$.   In particular, for $n\gs 1$ we define 
\[
    L(n)=\left\{h=(h_0,h_1,\dots) ~|~ h\text{ is an $h$-vector and } \sum_i  h_i=n\right\}.
\]
and set $\ds\ell(n)=|L(n)|$ for $n\gs 1$.

Using condition (C) in Macaulay's theorem above, we are able to construct all possible $h$-vectors of a given length.  In Figure \ref{fig:hvec}, we find the $h$-vectors of length at most 7 and that the first few terms of the sequence $\ds\left\{\ell(n)\right\}_{n\gs 1}$ are: 1, 1, 2, 3, 5, 8, 12.

\begin{figure}[h!] 
\begin{tabular}{llllllll}
	$\lambda = $ & 1 & 2 & 3 & 4 & 5 & 6 & 7 \\
	\hline
	\hline
	    & 1 & 11 & 111 & 1111 & 11111 & 111111 & 1111111 \\
	  	&   &	 & 12  & 121  & 1211  & 12111  & 121111 \\
	   &	&	 &     & 13   & 122   & 1221   & 12211 \\
	  	& 	& 	 &     &      & 131   & 123    & 1231 \\
	  	& 	& 	 &     &      & 14    & 1311   & 1222 \\
	  	& 	& 	 &     &      &		  & 132    & 13111 \\
	  	& 	& 	 &     &      &		  & 141    & 1321 \\
	  	& 	& 	 &     &      &		  & 15    & 133  \\
		& 	& 	 &     &      &		  &		   & 1411 \\
		& 	& 	 &     &      &		  &		   & 142 \\
		& 	& 	 &     &      &		  &		   & 151 \\
		& 	& 	 &     &      &		  &		   & 16 \\
		\hline		
	Total: & 1 & 1 & 2  & 3  & 5  & 8  & 12									
\end{tabular}
\caption{The $h$-vectors of length at most 7. We write $t_0t_1t_2\cdots t_s$ for the $h$-vector $(t_0, \lst t s)$. E.g. 1221 is the $h$-vector $(1,2,2,1)$. 
}\label{fig:hvec}
\end{figure}

After seeing the first few terms of this sequence, a natural question to ask is whether or not it is related to the Fibonacci sequence.  
In Theorem \ref{thm:upper-bound} we show $\ds\left\{\ell(n)\right\}_{n\gs 1}$ is bounded above by the Fibonacci sequence.   
To do this, we need to define the following family of integer vectors.

\begin{defn}\label{def:Bn}
	For $n\gs 1$, the set of integer vectors $\ds \B n$ is defined recursively as follows:
	\begin{enumerate}
	\item $\ds B(1)=\{(1)\}$;
	\item $\ds B(2)=\{(1,1)\}$;
	\item For $n\gs 3$ define $\ds B(n):=C(n)\cup D(n)$ where
\begin{align*}
C(n) &:=\left\{(1,\lst t s,1)~|~(1,\lst t s)\in B(n-1)\right\},\\
D(n) &:=\left\{(1,\lst t s+1)~|~(1,\lst t s)\in B(n-1),~\text{with } t_{s-1}>1\text{ or } s=1 \right\}.
\end{align*}
	 	
	\end{enumerate}
\end{defn}

\begin{rem}\label{rem:Bn}
	It is worth noticing that the sets $C(n)$ and $D(n)$ of Definition \ref{def:Bn} form a set partition of $B(n)$.
\end{rem}

The first few sets $B(n)$ are
\begin{align*}
B(1)&=\{(1)\};\\
B(2)&=\{(1,1)\};\\
B(3)&=\{(1,1,1),(1,2)\};\\
B(4)& = \{(1,1,1,1), (1,2,1), (1,3) \};\\
B(5)&=\{(1,1,1,1,1),(1,2,1,1),(1,3,1),(1,2,2),(1,4)\}.
\end{align*}

\begin{lem}\label{lem:fib-Bn}
  The cardinality of $B(n)$ is the $n^{\text{th}}$ Fibonacci number $F_n$.
\end{lem}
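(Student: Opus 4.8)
The natural approach is to show that the cardinalities $|B(n)|$ satisfy the Fibonacci recursion $|B(n)| = |B(n-1)| + |B(n-2)|$ for $n \geq 3$, together with the initial data $|B(1)| = |B(2)| = 1$; since $F_1 = F_2 = 1$ this gives $|B(n)| = F_n$ by induction. The initial values are immediate from parts (1) and (2) of Definition \ref{def:Bn}, so the whole problem reduces to establishing the recursion.

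By Remark \ref{rem:Bn}, $B(n) = C(n) \sqcup D(n)$ is a disjoint union, so $|B(n)| = |C(n)| + |D(n)|$. The map $(1,\lst t s) \mapsto (1,\lst t s, 1)$ is clearly a bijection from $B(n-1)$ onto $C(n)$ (appending a $1$ is injective, and every element of $C(n)$ arises this way by definition), so $|C(n)| = |B(n-1)|$. It therefore remains to prove $|D(n)| = |B(n-2)|$. The map $(1,\lst t s) \mapsto (1,t_1,\dots,t_{s-1},t_s+1)$ is injective on its domain (we can recover the preimage by subtracting $1$ from the last entry), so $|D(n)|$ equals the number of vectors $(1,\lst t s) \in B(n-1)$ satisfying the side condition ``$t_{s-1} > 1$ or $s = 1$''. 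I would then identify this set of vectors, via the map ``delete the last entry'' (or rather ``decrement the last entry to produce a terminal $1$'', i.e. $(1,\lst t s) \mapsto (1,t_1,\dots,t_{s-1},1)$ when $s\ge 2$, and handle $s=1$ separately), with $C(n-1) \subseteq B(n-1)$, and hence with $B(n-2)$ via the bijection $|C(n-1)| = |B(n-2)|$ already noted.

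The crux — and the step I expect to require the most care — is the combinatorial bookkeeping showing that $\{(1,\lst t s)\in B(n-1) : t_{s-1}>1 \text{ or } s=1\}$ is exactly the set of vectors in $B(n-1)$ whose last entry is either a standalone $1$ (i.e. they lie in $C(n-1)$) together with the singleton-structured ones. Concretely: a vector in $B(n-1)$ (for $n-1 \geq 3$) lies in $C(n-1)$ iff it ends in $1$, and lies in $D(n-1)$ iff it ends in an entry $\geq 2$; one checks from the recursive definition that membership in $D(n)$ — i.e. the condition ``$t_{s-1}>1$ or $s=1$'' on an element $(1,\lst t s)$ of $B(n-1)$ — is equivalent to $(1,\lst t s)$ being obtainable by the $C$-construction at the previous stage, plus the base case $s=1$. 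I would verify this by a short case analysis following the two ways an element of $B(n-1)$ can be built (from $C(n-1)$ or from $D(n-1)$), tracking what the penultimate entry $t_{s-1}$ is in each case; this pins down $|D(n)| = |C(n-1)| = |B(n-2)|$, completing the induction. As a sanity check, the displayed lists give $|B(1)|,\dots,|B(5)| = 1,1,2,3,5$, matching $F_1,\dots,F_5$.
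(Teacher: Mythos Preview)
Your setup is correct through $|C(n)|=|B(n-1)|$, reducing the problem to $|D(n)|=|B(n-2)|$; the gap is in your attempted identification of the domain of the $D$-construction with $C(n-1)$. That claim is false. For example, $(1,2,2)\in B(5)$ has $t_{s-1}=2>1$ and so contributes to $D(6)$, but it ends in $2$ and hence lies in $D(5)$, not $C(5)$; conversely $(1,2,1,1)\in B(5)$ lies in $C(5)$ but has $t_{s-1}=1$ and is \emph{excluded} from the domain. Your proposed map $(1,t_1,\dots,t_s)\mapsto(1,t_1,\dots,t_{s-1},1)$ does not even land in $B(n-1)$, since replacing $t_s$ by $1$ changes the sum whenever $t_s\ge 2$. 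More generally, the obvious ``delete or decrement the last entry'' maps from this domain into $B(n-2)$ fail to be injective (both $(1,3,1)$ and $(1,4)$ in $B(5)$ would land on $(1,3)\in B(4)$), so a direct bijection requires more than the bookkeeping you sketch.

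The paper sidesteps this by proving the equivalent three-term recursion $b_n=2b_{n-1}-b_{n-3}$. Writing $B(n-1)=E(n-1)\sqcup F(n-1)$, where $E(n-1)$ consists of vectors whose last \emph{two} entries equal $1$, one checks (using that in any element of $B(m)$ an entry equal to $1$ forces all later entries to be $1$) that the domain of the $D$-construction is exactly $F(n-1)$, so $|D(n)|=b_{n-1}-|E(n-1)|$. Stripping the two trailing $1$'s gives a bijection $E(n-1)\cong B(n-3)$, whence $|D(n)|=b_{n-1}-b_{n-3}$ and $b_n=b_{n-1}+(b_{n-1}-b_{n-3})$. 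Together with $b_1=b_2=1$, $b_3=2$ this is equivalent to the Fibonacci recursion.
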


\begin{proof} 
  For notational convenience, we let $b_n=|B(n)|$ and observe that $b_1=b_2=1$ and $b_3=2$.  We need to show that for $n\gs 4$ this sequence satisfies the Fibonacci recurrence
  \begin{align*}
    b_{n} &= b_{n-1}+b_{n-2}\\
      &= b_{n-1}+b_{n-1}-b_{n-3}.
  \end{align*}

	By Remark \ref{rem:Bn}, we know $\ds b_n=|C(n)|+|D(n)|$.  Since $\ds |C(n)|=b_{n-1}$, we need to show that $|D(n)|=b_{n-1}-b_{n-3}$.  Partition the set $B(n-1)=E(n-1)\cup F(n-1)$ so that $E(n-1)$ is the set of vectors $v \in B(n-1)$ such that the last two entires of $v$ equal 1, and $F(n-1)$ consists of the remaining vectors.   A vector is in $E(n-1)$ if and only if it came from $B(n-3)$ by adjoining  1 twice in accordance to the recursion in Definition \ref{def:Bn}. In particular, we have that
	\[
		 |E(n-1)|=|C(n-2)|=b_{n-3}. 
	\]

We claim that $|D(n)| = |F(n-1)|$, and hence $|D(n)| = b_{n-1}- b_{n-3}$.  To see this, notice that $F(n-1)\subseteq B(n-1)$ consists of all vectors whose second to last term is greater than 1.  Hence we can increase the last term of any vector in $F(n-1)$ by 1, and the resulting vector is in $B(n)$.  As such, all the vectors in $D(n)$ will come from vectors in $F(n-1)$, hence $|D(n)|=|F(n-1)|$.
\end{proof}

\begin{thm}\label{thm:upper-bound} 
For all $n\gs 1$, $\ds L(n)\subseteq B(n)$.  In particular, the sequence $\ell(n)$ is bounded above by the Fibonacci sequence.
\end{thm}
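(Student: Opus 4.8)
The plan is to prove $L(n)\subseteq B(n)$ by induction on $n$, exploiting the recursive structure of both sets. The base cases $n=1,2$ are immediate since $L(1)=\{(1)\}=B(1)$ and $L(2)=\{(1,1)\}=B(2)$. For the inductive step, fix $n\gs 3$ and take an $h$-vector $h=(1,h_1,\dots,h_s)$ of length $n$. The key observation is that deleting the last entry from $h$ should produce (after possibly decrementing) an $h$-vector of length $n-1$, putting us in a position to apply the inductive hypothesis and then recognize the result as lying in $C(n)$ or $D(n)$.

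First I would split into two cases according to whether $h_s=1$ or $h_s\gs 2$. If $h_s=1$, then $h'=(1,h_1,\dots,h_{s-1})$ is still an $h$-vector: condition (C) for $h'$ is inherited from condition (C) for $h$, since we have only removed a terminal entry and the inequalities $h_{t+1}\ls h_t^{\langle t\rangle}$ for $t\ls s-2$ are unchanged. Moreover $h'$ has length $n-1$, so by induction $h'\in B(n-1)$, and then $h=(1,h_1,\dots,h_{s-1},1)\in C(n)\subseteq B(n)$ by definition of $C(n)$. If instead $h_s\gs 2$, I would set $h'=(1,h_1,\dots,h_{s-1},h_s-1)$, which has length $n-1$; the claim is that $h'$ is again an $h$-vector. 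Here the only condition (C) inequality that could fail is $h_s-1\ls h_{s-1}^{\langle s-1\rangle}$, but this follows from $h_s\ls h_{s-1}^{\langle s-1\rangle}$. So by induction $h'\in B(n-1)$. To conclude $h\in D(n)$, I must check the side condition in the definition of $D(n)$: either $s=1$, or the second-to-last entry of $h'$ exceeds $1$. When $s\gs 2$, the second-to-last entry of $h'$ is $h_{s-1}$, and I need $h_{s-1}>1$. This is exactly where the real content lies, and I expect it to be the main obstacle.

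To handle that obstruction, I would use the numerical force of Macaulay's condition (C) at the top degree. Suppose toward a contradiction that $s\gs 2$, $h_s\gs 2$, but $h_{s-1}=1$. Since $h$ is an $h$-vector with $h(0)=1$, having $h_{s-1}=1$ forces, via the binomial representation in \eqref{eqn:binom-rep} and \eqref{eqn:macaulay-coeff}, that $1^{\langle s-1\rangle}=1$ (one computes $1=\binom{s-1}{s-1}$, so $1^{\langle s-1\rangle}=\binom{s}{s}=1$); but then condition (C) gives $h_s\ls 1^{\langle s-1\rangle}=1$, contradicting $h_s\gs 2$. Hence whenever $h_s\gs 2$ and $s\gs 2$ we automatically have $h_{s-1}>1$, so the side condition in $D(n)$ is satisfied and $h\in D(n)\subseteq B(n)$.

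This exhausts both cases, so $L(n)\subseteq B(n)$ for all $n\gs 1$. The final sentence of the theorem follows immediately: $\ell(n)=|L(n)|\ls |B(n)|=F_n$ by Lemma~\ref{lem:fib-Bn}.
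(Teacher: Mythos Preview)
Your proof is correct. Both your argument and the paper's ultimately hinge on the same numerical fact, namely that $1^{\langle t\rangle}=1$ for all $t\gs 1$, so once an $h$-vector hits the value $1$ it can never rise again; you isolate this as the contradiction in Case~2, while the paper phrases it as ``if $h_i=1$ then $h_j=1$ for all $j\gs i$.''

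The route, however, is genuinely different. The paper first unwinds the recursive definition of $B(n)$ into a closed-form description: $B(n)$ is exactly the set of integer vectors $(1,t_1,\dots,t_s)$ summing to $n$ with the property that $t_i=1$ forces $t_j=1$ for $j\gs i$. Once that characterization is in hand, the containment $L(n)\subseteq B(n)$ is a one-line application of condition~(C). Your argument instead stays at the level of the recursion, building an induction on $n$ that mirrors the $C(n)\cup D(n)$ decomposition. This buys you a self-contained proof that never needs to state or verify the closed-form description of $B(n)$ (which the paper asserts without proof), at the cost of a longer case analysis. The paper's version is shorter and arguably cleaner, but yours makes explicit exactly where the side condition ``$t_{s-1}>1$ or $s=1$'' in the definition of $D(n)$ is forced by Macaulay growth.
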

\begin{proof}
	Notice that the set $B(n)$ consists of all integer vectors $(1,\lst t s)$ with $1+t_1+t_2+\cdots+t_s=n$ and the property that if $t_i=1$, then $t_j=1$ for all $j\gs i$. Let $\bb h = (1,\lst h s)\in L(n)$ be an $h$-vector of length $n$.  Using Macaulay's Theorem condition (C) it is not hard to see that if $h_i=1$ for some $i \gs 1$, then $h_j=1$ for all $j\gs i$.  Thus $L(n)\subseteq B(n)$.
\end{proof}

\begin{rem}
For $n\gs 7$, there are elements of $\B n$ that are not $h$-vectors.  See $(1,2,4)$ for an example.  Further analysis shows the first 20 terms of $\ell(n)$ shows this upper bound is not tight:
\[
 \{\ell(n)\}_{n\gs 1} = 1, 1, 2, 3, 5, 8, 12, 18, 27, 40, 57, 82, 116, 163, 227, 313, 428, 583, \ldots .
\]
\end{rem}

\section{Integer Partitions} \label{sec:int-part} 

In this section we obtain a lower bound for the sequence $\left\{\ell(n)\right\}_{n\gs 1}$
by restricting our attention to zero-dimensional $\kk$-algebras of the form $\kk[x,y]/I$, where
$I$ is a homogeneous ideal of $\kk[x,y]$.  That is, we are concerned with $h$-vectors with $h_1=2$.  The main result is Theorem \ref{thm:lowbound} which shows that $\ell(n)$ is greater than or equal to the number of integer partitions of $n$ into distinct parts.  First, we develop the necessary background on integer partitions and \lex ideals.

For an $(x,y)$-primary monomial ideal $I$ in $\kk[x,y]$, we define 
\[
	\lam i = \lamb i I = |\{ x^{i-1}y^j \notin I \vl j \gs 0 \}|.
\]
Notice that $\lam i$ can also be viewed as a well-defined map from the set of monomial ideals in $\kk[x,y]$ to the positive integers. Further, from the definition of $\lambda_i$, we are able to write
\begin{equation}\label{eqn:ideal-gen}
	I = \left(y^{\lamb 1 I},x y^{\lamb 2 I},\dots,x^{i-1}y^{\lamb i I},\dots\right).
\end{equation}
As such, we have the following results detailing the natural correspondence between monomial ideals in two variables and integer partitions. Proposition \ref{prop:1-1-part} is known (see \cite{snellman04}) but we prove it here for the convenience of the reader.

\begin{lem}\label{lem:int-partition}
	Let $I$ be an $(x,y)$-primary monomial ideal in $R = \kk[x,y]$.  Then $(\lam 1, \lam 2, \cdots)$ is an integer partition of $\lambda(R/I)$.
\end{lem}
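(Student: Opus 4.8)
The plan is to identify the parts $\lambda_i$ with the sizes of the ``columns'' of standard monomials of $R/I$ and to read off both the partition property and the total length from that picture. Recall that, $I$ being a monomial ideal, the residue classes of the monomials not in $I$ form a $\kk$-basis of $R/I$; call these the standard monomials. Since $I$ is $(x,y)$-primary, $R/I$ has finite length, so there are only finitely many standard monomials.

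First I would partition the set of standard monomials according to the exponent of $x$: for each $i \geq 1$, the $i$-th block consists of those $x^{i-1}y^j \notin I$ with $j \geq 0$, and by the definition of $\lambda_i = \lambda_i(I)$ this block has cardinality $\lambda_i$. Summing over $i$ and using that the standard monomials form a basis gives $\sum_{i\geq 1}\lambda_i = \dim_\kk(R/I) = \lambda(R/I)$, which is the ``partition of $\lambda(R/I)$'' half of the claim.

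Next I would check that the sequence is weakly decreasing, $\lambda_1 \geq \lambda_2 \geq \cdots$. The key observation is that if $x^i y^j \notin I$ then $x^{i-1}y^j \notin I$: otherwise $x^{i-1}y^j \in I$ would force $x^i y^j = x\cdot x^{i-1}y^j \in I$, as $I$ is an ideal. Hence $\{\,j \geq 0 : x^i y^j \notin I\,\} \subseteq \{\,j \geq 0 : x^{i-1}y^j \notin I\,\}$, so $\lambda_{i+1} \leq \lambda_i$. Combined with finiteness of $R/I$ — some power $x^N$ lies in $I$, so $\lambda_i = 0$ for $i > N$ — the nonzero $\lambda_i$ form a weakly decreasing sequence of positive integers (note $1 = x^0y^0$ is standard since $I$ is proper, so $\lambda_1 \geq 1$). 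This is exactly the assertion that $(\lambda_1, \lambda_2, \ldots)$ is an integer partition of $\lambda(R/I)$.

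I do not expect a genuine obstacle here; the only point needing care is the monotonicity step, i.e.\ checking that passing from the $i$-th column to the $(i-1)$-st column of the staircase only ever adds standard monomials, which is precisely where the hypothesis that $I$ is an ideal (and not merely a set of monomials) is used. One could instead run the argument through the generating set $I = (y^{\lambda_1}, xy^{\lambda_2}, \ldots)$ recorded in \eqref{eqn:ideal-gen}, but the column bookkeeping seems the most transparent route.
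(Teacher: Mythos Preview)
Your proof is correct and follows essentially the same approach as the paper: both argue that $\sum_i \lambda_i$ and $\lambda(R/I)$ count the monomials not in $I$, and both derive the monotonicity $\lambda_i \geq \lambda_{i+1}$ from the ideal property. The paper simply asserts the latter as ``the nature of monomial ideals,'' whereas you spell out the contrapositive $x^{i-1}y^j \in I \Rightarrow x^i y^j \in I$ explicitly, which is a welcome bit of extra detail.
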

\begin{proof} 
Since $I$ is $(x,y)$-primary $\lambda(R/I)$ is finite.  We need to show $\lam 1 \gs \lam 2 \gs \cdots$ and $\lambda(R/I) = \sum \lam i$.  The first condition is true by the nature of monomial ideals.  The second condition holds because both $\lambda(R/I)$ and $\sum \lam i$ count the number of monomials not in $I$.
\end{proof}

\begin{prop}\label{prop:1-1-part}
		Let $R = \kk [x,y]$.  There exists a one-to-one correspondence between $(x,y)$-primary monomial ideals $I$ and integer partitions.	
\end{prop}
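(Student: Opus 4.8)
The plan is to show that the assignment $I \mapsto (\lamb{1}{I}, \lamb{2}{I}, \dots)$, which by Lemma \ref{lem:int-partition} sends an $(x,y)$-primary monomial ideal of $R$ to an integer partition of $\lambda(R/I)$, is a bijection onto the set of all integer partitions. Injectivity and surjectivity will each follow by unwinding the definition of $\lamb{i}{\cdot}$ together with the description \eqref{eqn:ideal-gen}.

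First I would record the basic fact underlying \eqref{eqn:ideal-gen}: since $I$ is a monomial ideal, a monomial $x^{i-1}y^j$ lies in $I$ if and only if $j \gs \lamb{i}{I}$, so $I$ is generated by the monomials $x^{i-1}y^{\lamb{i}{I}}$ as $i$ ranges over $\gs 1$. Injectivity is then immediate: if $I$ and $J$ are $(x,y)$-primary monomial ideals with $\lamb{i}{I} = \lamb{i}{J}$ for all $i$, then by \eqref{eqn:ideal-gen} they are generated by the same list of monomials, hence $I = J$.

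For surjectivity, given an integer partition $\mu = (\mu_1 \gs \mu_2 \gs \cdots \gs \mu_k > 0)$ of some $n \gs 1$, I would define
\[
  I_\mu = \bigl(y^{\mu_1},\, xy^{\mu_2},\, \dots,\, x^{k-1}y^{\mu_k},\, x^k\bigr) \subseteq \kk[x,y].
\]
Since $x^k \in I_\mu$ and $y^{\mu_1} \in I_\mu$, the radical of $I_\mu$ is $(x,y)$, so $I_\mu$ is $(x,y)$-primary. It remains to check $\lamb{i}{I_\mu} = \mu_i$ for all $i \gs 1$, with the convention $\mu_i = 0$ for $i > k$. A monomial $x^{i-1}y^j$ lies in $I_\mu$ if and only if it is divisible by one of the listed generators, that is, if and only if $i \gs k+1$, or $1 \ls i \ls k$ and $j \gs \mu_m$ for some $m \ls i$. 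Because $\mu$ is weakly decreasing, $\min_{m \ls i}\mu_m = \mu_i$, so for $1 \ls i \ls k$ the row-$i$ monomials outside $I_\mu$ are exactly $x^{i-1}y^0, \dots, x^{i-1}y^{\mu_i - 1}$, giving $\lamb{i}{I_\mu} = \mu_i$; and for $i \gs k+1$ every monomial $x^{i-1}y^j$ is divisible by $x^k$, so $\lamb{i}{I_\mu} = 0 = \mu_i$. Hence $(\lamb{1}{I_\mu},\lamb{2}{I_\mu},\dots) = \mu$, and combined with injectivity this establishes the correspondence.

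The routine but slightly delicate point — the main obstacle — is the bookkeeping around truncation: an integer partition has only finitely many nonzero parts, whereas $(\lamb{1}{I}, \lamb{2}{I}, \dots)$ is a priori an infinite sequence, so one must be sure the extra generator $x^k$ is present in $I_\mu$ to force $\lamb{i}{I_\mu} = 0$ for $i > k$, and conversely that every $(x,y)$-primary monomial ideal genuinely has $\lamb{i}{\cdot}$ eventually equal to $0$ — which is exactly the finiteness of $\lambda(R/I)$ already used in Lemma \ref{lem:int-partition}. Everything else is a direct reading off of the definitions.
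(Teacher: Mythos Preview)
Your proof is correct and follows essentially the same route as the paper's: define the map $I \mapsto (\lamb{1}{I},\lamb{2}{I},\dots)$, check injectivity via the generator description \eqref{eqn:ideal-gen}, and check surjectivity by writing down the obvious ideal associated to a partition. If anything, you are slightly more careful than the paper at the surjectivity step --- the paper's candidate ideal omits the generator $x^k$, whereas you include it and explicitly verify $(x,y)$-primariness and the vanishing of $\lamb{i}{I_\mu}$ for $i>k$.
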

\begin{proof}
	Let $\mathfrak M$ be the set of $(x,y)$-primary monomial ideals in $R$ and $\mathfrak P$ be the set of integer partitions.  Define the map $\Phi:\mathfrak M \to \mathfrak P$ by 
\[
	\Phi(I) = (\lamb 1 I, \lamb 2 I, \cdots, \lamb s I),
\]
where $s$ is the largest integer such that $\lamb i I \neq 0$. 
	
	To show that $\Phi$ is one-to-one, let $I,J \in \mathfrak M$ such that $\Phi(I) = \Phi(J)$.  This forces $\lam i = \lamb i I = \lamb i J$ for all $i$.  In particular, for each $i$ we have that
\begin{align*}
	x^{i-1}y^0, x^{i-1}y^1,x^{i-1}y^2,\ldots, x^{i-1}y^{\lam i - 1} & \notin I,J; \\
	x^{i-1}y^{\lam i} & \in I,J.
\end{align*}
Hence $I = J$ as this completely defines the ideals.
	
To show $\Phi$ is onto, consider an integer partition $(\lst v t) \in \mathfrak P$ and let $I=\left(y^{v_1},x y^{v_2},\dots,x^{i-1}y^{v_t}\right)$. It is not hard to see that $\Phi(I) = (\lst v t)$.
\end{proof}

\begin{example}\label{part_ex}
Let $I=(y^3,x^2 y,x^3)\subseteq \kk[x,y]$.  The monomials not in $I$ are
\begin{align*}
& 1, y, y^2;\\
& x, xy, xy^2;\\
& x^2.
\end{align*}
Therefore the partition is $(3,3,1)$.  For those familiar with Ferrers diagrams,
the diagram is bottom and left justified diagram in the plane where the parts
are the number of boxes in the columns. 

\begin{center}
\resizebox{6cm}{!}{%
	\begin{tikzpicture}[inner sep=4.7mm,
		rect/.style={rectangle,draw=black!20,fill=black!20,thick},														
		]
		
	   \path[use as bounding box] (-1,-0.5) rectangle (7,6.5);

		\draw[help lines] (0,0) grid (5,5);
		\draw[thick] (0,0) edge[->] (0,5.5);
		\draw[thick] (0,0) edge[->] (5.5,0);		
		
		\node at (-0.2,-0.2) [] {0};
		\node at (1,-0.2) [] {1};		
		\node at (2,-0.2) [] {2};		
		\node at (3,-0.2) [] {3};		
		\node at (4,-0.2) [] {4};		
		\node at (5,-0.2) [] {5};										
		\node at (5.8,-0.2) [] {$x$};												
		
		\node at (-0.2,1) [] {1};		
		\node at (-0.2,2) [] {2};		
		\node at (-0.2,3) [] {3};		
		\node at (-0.2,4) [] {4};		
		\node at (-0.2,5) [] {5};										
		\node at (-0.2,5.8) [] {$y$};

		\foreach \x in {0.5,...,4.5}
			\foreach \y in {3.5,4.5}
			{
				\node at (\x,\y) [rect] {};
			}
		
		\foreach \x in {2.5,...,4.5}
			\foreach \y in {1.5,2.5}
			{
				\node at (\x,\y) [rect] {};
			}		

		\foreach \x in {3.5,...,4.5}
			\foreach \y in {0.5}
			{
				\node at (\x,\y) [rect] {};
			}			

		\node at (3,0) [circle,fill=black,inner sep=1.5pt] {};
		\node at (2,1) [circle,fill=black,inner sep=1.5pt] {};		
		\node at (0,3) [circle,fill=black,inner sep=1.5pt] {};		
		
\end{tikzpicture}
}
\end{center}

\end{example}

 	The term \lex represents the \dfn{degree-lexicographical order} of monomials.  I.e., given a polynomial ring $R = \kk[\lst x n]$, 
\[
	x_1^{a_1}x_2^{a_2}\cdots x_n^{a_n} > x_1^{b_1}x_2^{b_2}\cdots x_n^{b_n}
\]
if the first non-zero entry of $(\sum (a_i - b_i), a_1-b_1, a_2-b_2, \ldots, a_n-b_n)$ is positive. This is a total order and as such, all monomials of degree $t$ are totally ordered.  A \dfn{\lex-segment} is the sequence of the first $s$ monomial terms in a given degree (in descending order).  We call an ideal generated by \lex-segments a \lex ideal. More precisely, we have the following definition.

\begin{defn}\label{lexideal_defn}
A monomial ideal $I\subseteq \kk[x_1,\dots,x_n]$ is is called a \lex ideal if 
for each $j\gs 0$, $I\cap \kk[x_1,\dots,x_n]_j$  is generated 
as a $\kk$-vector space by the first $\dim_\kk(I\cap  \kk[x_1,\dots,x_n]_j)$  
 monomials of degree $j$  in descending lexicographical order.
\end{defn}

\begin{rem}\label{rem:lex}
In the ring $R=\kk[x,y]$, an ideal $I\subseteq R$ is a \lex ideal if and only if it has the following property: if $x^iy^j\in I$ with $j\geq 1$, then $x^{i+1}y^{j-1}\in I$.
\end{rem}

	Given an $h$-vector, there are many ideals associated to it.  However, by Macaulay's Theorem, a mapping $h:\Z_{\gs 0} \to \Z_{\gs 0}$ is the Hilbert function for some graded ideal if and only if it is the Hilbert function of an ideal generated by \lex-segments. In other words, for each $h$-vector, there exists a unique ideal generated by \lex-segments. The next proposition relates \lex ideals to integer partitions

\begin{prop}\label{prop:lex-distinctparts}
Let $R=\kk[x,y]$ and $I$ be an $(x,y)$-primary monomial ideal.  Then the integer partition $(\lamb 1 I, \lamb 2 I, \cdots)$ has distinct entries if and only if $I$ is a \lex ideal.
\end{prop}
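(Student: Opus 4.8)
The plan is to translate everything into the language of the numbers $\lambda_i(I)$ and then invoke the combinatorial description of lex ideals in two variables from Remark~\ref{rem:lex}. The one elementary fact I would isolate first is the following dictionary: directly from the definition of $\lambda_i(I)$ and the presentation \eqref{eqn:ideal-gen}, the monomials $x^{i-1}y^j$ outside $I$ are exactly those with $0\le j\le \lambda_i(I)-1$, so
\[
x^{i-1}y^j\in I \iff j\ge \lambda_i(I).
\]
With this in hand, Remark~\ref{rem:lex} says precisely that $I$ is a lex ideal if and only if: whenever $x^iy^j\in I$ with $j\ge 1$, one has $x^{i+1}y^{j-1}\in I$; and this becomes a statement purely about consecutive entries of the sequence $(\lambda_1(I),\lambda_2(I),\dots)$.

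For the implication ``lex $\Rightarrow$ distinct parts'' I would argue by contraposition. Suppose the partition $(\lambda_1(I),\lambda_2(I),\dots)$ does not have distinct parts. Since it is weakly decreasing by Lemma~\ref{lem:int-partition}, there is an index $i$ with $\lambda_i(I)=\lambda_{i+1}(I)=m$ for some $m\ge 1$. Then $x^{i-1}y^m\in I$ (as $m\ge\lambda_i(I)$) and the $y$-exponent is $\ge 1$, so if $I$ were lex, Remark~\ref{rem:lex} would force $x^{i}y^{m-1}\in I$; by the dictionary this means $m-1\ge\lambda_{i+1}(I)=m$, a contradiction. Hence $I$ is not lex.

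For the implication ``distinct parts $\Rightarrow$ lex'', assume $\lambda_1(I)>\lambda_2(I)>\cdots>\lambda_s(I)>0$ and $\lambda_k(I)=0$ for $k>s$, and verify the criterion of Remark~\ref{rem:lex} directly. Given $x^iy^j\in I$ with $j\ge 1$, the dictionary gives $j\ge\lambda_{i+1}(I)$, and I want $j-1\ge\lambda_{i+2}(I)$. If $i+1\le s$ then strict decrease (or vanishing beyond index $s$) gives $\lambda_{i+2}(I)\le\lambda_{i+1}(I)-1\le j-1$; if $i+1>s$ then $\lambda_{i+2}(I)=0\le j-1$ because $j\ge 1$. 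In either case $x^{i+1}y^{j-1}\in I$, so $I$ is lex.

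I do not expect a real obstacle here: the content is entirely in stating the dictionary cleanly and in being careful with the boundary case $\lambda_{i+1}(I)=0$ (i.e.\ $x^i\in I$, where the partition has already terminated and the lex condition is vacuously satisfied). The only point to phrase with care is that a weakly decreasing sequence of positive integers has all parts distinct exactly when consecutive parts strictly decrease, so that the ``local'' condition extracted from Remark~\ref{rem:lex} at each index is equivalent to the ``global'' distinctness of the partition.
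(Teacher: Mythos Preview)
Your proposal is correct and follows essentially the same approach as the paper: both directions are reduced to the criterion of Remark~\ref{rem:lex}, and the argument turns on the equivalence $x^{i-1}y^j\in I\iff j\ge\lambda_i(I)$, which the paper uses implicitly via the generating set \eqref{eqn:ideal-gen}. The only cosmetic difference is that you prove ``lex $\Rightarrow$ distinct'' by contraposition, whereas the paper argues directly that $\lambda_i(I)>\lambda_{i+1}(I)$ by applying the lex property to $x^{i-1}y^{\lambda_i(I)}$; the content is the same.
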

\begin{proof} 
	Assume that $(\lamb 1 I, \lamb 2 I,\dots)$ is a partition with distinct parts and write
\[
	I=\left(y^{\lamb 1 I},x y^{\lamb 2 I},\dots,x^{i-1}y^{\lamb i I},\dots\right)
\]	
as noted in \eqref{eqn:ideal-gen}.  We will use Remark \ref{rem:lex} to show that $I$ is \lex.  Suppose $x^a y^b\in I$; then there is a $\lamb i I$ such that $\ds x^{i-1}y^{\lamb i I}$ divides $x^a y^b$.  As such, $a+1\gs i$ and  $b-1\gs\lamb {i+1} I$ since $b\gs \lamb i I >\lamb{i+1} I$.  Therefore, $\ds x^i y^{\lamb {i+1} I}$ divides $x^{a+1} y^{b-1}$ and hence $x^{a+1}y^{b-1}$ is in $I$.

Conversely, let $I$ be a \lex ideal and  consider the partition $(\lamb 1 I,\lamb 2 I,\dots)$. By definition of $\lamb i I$, for each
$i\gs 1$ the monomial $\ds x^{i-1} y^{\lamb i I}$ is in $I$ but 
$\ds x^{i-1} y^{\lamb i I -1}\not\in I$.  Since $I$ is a \lex ideal,
we have $\ds x^{i} y^{\lamb i I -1}\in I$.  However, by definition of $\lamb {i+1} I$, 
$\ds x^{i} y^{\lamb {i+1} I -1}\not\in I$.  Therefore, $\ds \lamb i I>\lamb {i+1} I$
and all the parts are distinct.
\end{proof}

\begin{example}
	Consider partition $(5,4,2)$. The corresponding \lex ideal is $I = (y^5,xy^4,x^2y^2,x^3)$, with $h$-vector $(1,2,3,3,2)$, as can be seen by the lattice:

\begin{center}
\resizebox{6cm}{!}{%
	\begin{tikzpicture}[inner sep=4.7mm,
		rect/.style={rectangle,draw=black!20,fill=black!20,thick},														
		]
		
	   \path[use as bounding box] (-1,-0.5) rectangle (7,7.1);

		\draw[help lines] (0,0) grid (6,6);
		\draw[thick] (0,0) edge[->] (0,6.5);
		\draw[thick] (0,0) edge[->] (6.5,0);		
		
		\node at (-0.2,-0.2) [] {0};
		\node at (1,-0.2) [] {1};		
		\node at (2,-0.2) [] {2};		
		\node at (3,-0.2) [] {3};		
		\node at (4,-0.2) [] {4};		
		\node at (5,-0.2) [] {5};										
		\node at (6,-0.2) [] {6};												
		\node at (6.8,-0.2) [] {$x$};												
		
		\node at (-0.2,1) [] {1};		
		\node at (-0.2,2) [] {2};		
		\node at (-0.2,3) [] {3};		
		\node at (-0.2,4) [] {4};		
		\node at (-0.2,5) [] {5};										
		\node at (-0.2,6) [] {6};												
		\node at (-0.2,6.8) [] {$y$};

		\foreach \x in {0.5,...,5.5}
			\foreach \y in {5.5}
			{
				\node at (\x,\y) [rect] {};
			}
		
		\foreach \x in {1.5,...,5.5}
			\foreach \y in {4.5}
			{
				\node at (\x,\y) [rect] {};
			}		

		\foreach \x in {2.5,...,5.5}
			\foreach \y in {2.5,...,3.5}
			{
				\node at (\x,\y) [rect] {};
			}
			
		\foreach \x in {3.5,...,5.5}
			\foreach \y in {0.5,1.5}
			{
				\node at (\x,\y) [rect] {};
			}			

		\node at (3,0) [circle,fill=black,inner sep=1.5pt] {};
		\node at (2,2) [circle,fill=black,inner sep=1.5pt] {};		
		\node at (1,4) [circle,fill=black,inner sep=1.5pt] {};		
		\node at (0,5) [circle,fill=black,inner sep=1.5pt] {};		
		
\end{tikzpicture}
}
\end{center}

Notice that the $i^{\text{th}}$ entry of the partition $(5,4,2)$ counts the lattice points not in the ideal on the line $x = i-1$, for $i = 1,2,3$.  Likewise, the $j^{\text{th}}$ entry of the $h$-vector $(1,2,3,3,2)$ correspond to the number of lattice points not in the ideal on the line $y = -x + j$, for $0 \ls j \ls 4$. 
\end{example}

	Define the set $L_2(n) := \{ (1,\lst h s) \in L(n) \vl h_1 = 2 \}.$  Note that this set consists of all possible $h$-vectors of a zero-dimensional standard $\kk$-algebra of the form $\kk[x,y]/I$ (we are not allowing $I$ to contain $x$ or $y$). As such, we have the following.

\begin{thm}\label{thm:lowbound}
	The number of integer partitions of $n \in \Z_{\gs 1}$ into distinct parts is equal to $|L_2(n)|$.
\end{thm}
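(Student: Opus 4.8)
The plan is to set up an explicit bijection between $L_2(n)$ and the set of partitions of $n$ into distinct parts, with every arrow passing through $(x,y)$-primary monomial ideals of $\kk[x,y]$. The engine is Macaulay's Theorem together with the fact, recorded just before the statement, that each $h$-vector is the Hilbert function of a \emph{unique} \lex ideal. Thus an element $\bb h = (1,2,h_2,\dots,h_s) \in L_2(n)$ determines a unique \lex ideal $I \subseteq \kk[x,y]$ with $H_{\kk[x,y]/I} = \bb h$, and since $\sum_i h_i = n$ is finite the algebra is zero-dimensional, so $I$ is $(x,y)$-primary and Lemma~\ref{lem:int-partition} applies to $I$.

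First I would define the forward map $L_2(n) \to \{\text{partitions of } n \text{ into distinct parts}\}$ by sending $\bb h$ to $(\lamb 1 I, \lamb 2 I, \dots)$, where $I$ is the unique \lex ideal realizing $\bb h$. By Lemma~\ref{lem:int-partition} this is a partition of $\lambda(\kk[x,y]/I) = n$, and by Proposition~\ref{prop:lex-distinctparts} it has distinct parts because $I$ is \lex. For injectivity I would invoke Proposition~\ref{prop:1-1-part}: distinct $h$-vectors have distinct \lex ideals by the uniqueness in Macaulay's Theorem, and $\Phi$ separates distinct $(x,y)$-primary monomial ideals, so the composite is injective.

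For surjectivity, given a partition $\mu = (\mu_1 > \mu_2 > \cdots)$ of $n$ into distinct parts I would form $I = (y^{\mu_1}, x y^{\mu_2}, \dots)$ as in~\eqref{eqn:ideal-gen}. Proposition~\ref{prop:lex-distinctparts} guarantees $I$ is \lex precisely because the parts of $\mu$ are distinct, Proposition~\ref{prop:1-1-part} gives $\Phi(I) = \mu$, and Lemma~\ref{lem:int-partition} shows the resulting $h$-vector has length $n$. The remaining point is to check that this $h$-vector lies in $L_2(n)$, i.e.\ that $h_1 = 2$; reading the minimal generators in~\eqref{eqn:ideal-gen}, this amounts to verifying that none of them is linear, which I would translate directly into a condition on $\mu_1$ and $\mu_2$.

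The main obstacle is exactly this last translation: carefully matching the Macaulay condition $h_1 = 2$ (equivalently $x, y \notin I$) against the shape of $\mu$, so that the image of the surjection is \emph{precisely} $L_2(n)$ and the boundary partitions are accounted for without over- or under-counting. Once the correspondence between the condition $h_1 = 2$ and the admissible partitions is pinned down, the two maps are mutually inverse and the stated equality $|L_2(n)| =$ (number of partitions of $n$ into distinct parts) follows; I expect this boundary bookkeeping, rather than the core bijection, to be where all the care is required.
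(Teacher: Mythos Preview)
Your approach is exactly the paper's: pass from $h$-vectors in $L_2(n)$ to their unique \lex ideals via Macaulay's Theorem, then to partitions via Proposition~\ref{prop:1-1-part}, and use Proposition~\ref{prop:lex-distinctparts} to see that \lex corresponds to distinct parts. The paper's proof is simply the two-sentence compression of what you wrote.

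Your flagged obstacle is real and worth resolving explicitly, since the paper glosses over it. Writing $I=(y^{\mu_1},xy^{\mu_2},\dots,x^{t-1}y^{\mu_t},x^t)$ for a partition $\mu_1>\mu_2>\cdots>\mu_t\ge 1$, the condition $h_1=2$ (i.e.\ $x,y\notin I$) is equivalent to $t\ge 2$; once $t\ge 2$ one has $\mu_1\ge 2$ automatically and every listed generator has degree at least $2$. Conversely, the single-part partition $\mu=(n)$ gives $I=(y^n,x)$ with $h$-vector $(1,1,\dots,1)\in L_1(n)$. So the bijection you build lands exactly on partitions of $n$ into \emph{at least two} distinct parts, matching the phrasing in the introduction and the data in Figure~\ref{fig:h-seq}, rather than the literal statement of the theorem.
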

\begin{proof}
	 By Macaulay's Theorem \cite[Theorem 4.2.10]{BH}, each element of $L_2(n)$ corresponds uniquely to a \lex ideal.  Hence by Propositions \ref{prop:lex-distinctparts} and \ref{prop:1-1-part}, every element of $L_2(n)$ is in one-to-one correspondence with an integer partition with distinct parts.
\end{proof}

	Since $L_2(n) \subseteq L(n)$, we have a lower bound for the sequence $\left\{\ell(n)\right\}_{n\gs 1}$.  Although the bounds for $\left\{\ell(n)\right\}_{n\gs 1}$ are not tight, they have nice combinatorial interpretations.
Given this information, it is natural to ask the following questions: What are some better upper and lower bounds? 
Is it possible to write the sequence $\left\{\ell(n)\right\}_{n\gs 1}$ in a closed formula?

\section{More Properties}\label{sec:more-prop}

In this section we refine the set $L(n)$ of $h$-vectors of length $n$ in an attempt to obtain a closed form for $\ell(n)$. 
Let $L(n)$ be defined as in Section \ref{sec:fib-bound} and set
	\begin{align*}
		L_k(n) = &\{(1,h_1,h_2,\dots) \in L(n) \st h_1 = k \},
	\end{align*}
with $\ell_k(n) = |L_k(n)|$.  Notice that the $\ds L_k(n)$, $k\gs 1$ partition the set $L(n)$.

\begin{prop}\label{prop:tailseq}
  	Fix $n \in \Z_{\gs 0}$. If $k\gs 1$ such that $\ds \binom{k+2}{2} \gs n$, then $\ds \ell_{k+1}(n+1)=\ell_k(n)$.
\end{prop}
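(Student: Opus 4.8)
The plan is to exhibit an explicit bijection between $L_k(n)$ and $L_{k+1}(n+1)$ given by raising (resp.\ lowering) the degree-$1$ entry by one. Define $\phi\colon L_k(n)\to L_{k+1}(n+1)$ by sending an $h$-vector $\bb h=(1,k,h_2,h_3,\dots,h_s)\in L_k(n)$ to $\phi(\bb h)=(1,k+1,h_2,h_3,\dots,h_s)$. I would first check $\phi$ is well defined: the entries of $\phi(\bb h)$ sum to $n+1$ and its degree-$1$ value is $k+1$, so only membership in the class of $h$-vectors remains. By Macaulay's Theorem, condition (C), the value $h(1)$ occurs in exactly one of the inequalities $h(t+1)\ls h(t)^{\langle t\rangle}$, namely the one for $t=1$; hence changing the degree-$1$ entry from $k$ to $k+1$ can affect only the inequality $h_2\ls (h_1)^{\langle 1\rangle}$. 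Since $\bb h$ is an $h$-vector we have $h_2\ls k^{\langle 1\rangle}=\binom{k+1}{2}\ls\binom{k+2}{2}=(k+1)^{\langle 1\rangle}$, so $\phi(\bb h)\in L_{k+1}(n+1)$. No hypothesis on $n$ is needed for this, and $\phi$ is visibly injective.

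The content of the proposition is surjectivity of $\phi$, equivalently well-definedness of the candidate inverse $\psi\colon L_{k+1}(n+1)\to L_k(n)$, $\psi(1,k+1,h_2,\dots,h_s)=(1,k,h_2,\dots,h_s)$. Here the entries now sum to $n$ and the degree-$1$ value is $k$, so again I only need $(1,k,h_2,\dots,h_s)$ to satisfy condition (C), and the only inequality that could newly fail is $h_2\ls k^{\langle 1\rangle}=\binom{k+1}{2}$. This is where the hypothesis enters: since the entries of $(1,k+1,h_2,\dots,h_s)$ are non-negative and sum to $n+1$, we get $h_2\ls (n+1)-1-(k+1)=n-k-1$, and then $\binom{k+2}{2}\gs n$ together with Pascal's identity $\binom{k+2}{2}=\binom{k+1}{2}+(k+1)$ yields $h_2\ls\binom{k+2}{2}-(k+1)=\binom{k+1}{2}$, exactly as required (when $s=1$ there is no entry $h_2$ and nothing to check). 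Thus $\psi$ is well defined, $\phi$ and $\psi$ are mutually inverse, and $\ell_{k+1}(n+1)=\ell_k(n)$.

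The only nontrivial step — and the one I would expect to be the main obstacle to state cleanly — is converting the global length constraint $\sum_i h_i=n+1$ into the local Macaulay inequality $h_2\ls\binom{k+1}{2}$ in degree $2$; this is precisely what the numerical assumption $\binom{k+2}{2}\gs n$ is calibrated to make possible, and one sees from the examples (e.g.\ $(1,2,4)$ is not an $h$-vector while $(1,3,4)$ is) that the bound is sharp. The remaining care is purely bookkeeping of short $h$-vectors (the cases $s\le 2$) and the degenerate value $n=0$, neither of which causes real difficulty.
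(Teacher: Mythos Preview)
Your proof is correct and follows essentially the same route as the paper: both construct the map $L_k(n)\to L_{k+1}(n+1)$ that increments the degree-$1$ entry, observe that injectivity is automatic, and establish surjectivity by bounding $h_2\ls n-k-1$ from the length constraint and then using the hypothesis $\binom{k+2}{2}\gs n$ to conclude $h_2\ls\binom{k+1}{2}=k^{\langle 1\rangle}$. Your version is slightly more explicit about why only the $t=1$ Macaulay inequality is affected and about the degenerate short cases, but the argument is the same.
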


\begin{proof}

	Notice that if $(1,k,h_2,h_3,\dots) \in L_k(n)$, then $(1,k+1,h_2,h_3,\dots)$ satisfies Macaulay's condition (C). Let the map $\Psi: L_k(n) \to L_{k+1}(n+1)$ be defined by
	\[
		\Psi(1,k,h_2,h_3,\dots) = (1,k+1,h_2,h_3,\dots).
	\]
We claim that $\Psi$ is a bijection between $L_k(n)$ and $L_{k+1}(n+1)$ if  $\ds \binom{k+2}{2} \gs n$. As this map is certainly one-to-one for all $n\gs 1$, we only need to show it is onto.  Let $\bb h = (1,k+1,h_2,h_3,\dots)\in L_{k+1}(n+1)$.  Since the sum of the terms
	  equals $n+1$, we have 
	\[
		h_2\ls n+1 -1 -(k +1) =  n-k-1.  
	\]
By Condition (C), 
\[
	h_2\ls h_1^{\langle 1 \rangle}= (k+1)^{\langle 1 \rangle}=\binom{k+2}{2}.  
\]
However, $\ds \binom{k+2}{2} \gs n$ and thus
\[
 	h_2 \ls n-k-1\ls\binom{k+2}{2}-k-1 =\binom{k+1}{2} = k^{\langle 1 \rangle}.
\]
This shows that $(1,k,h_1,h_2,\dots)\in L_k(n)$ and hence $\Psi(1,k,h_1,h_2,\dots) = \bb h$.
\end{proof}

\begin{cor}
	For all $n,k \gs 1$, $\ell_k(n) \ls \ell_{k+1}(n+1)$.
\end{cor}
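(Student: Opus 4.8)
The plan is to notice that the inequality is an immediate by-product of the map already constructed in the proof of Proposition \ref{prop:tailseq}: that map is injective for \emph{all} $n,k\ge 1$, and only the surjectivity part of the argument needs the hypothesis $\binom{k+2}{2}\ge n$. So the real content is to confirm that well-definedness and injectivity of $\Psi$ are unconditional.

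First I would re-examine the map $\Psi\colon L_k(n)\to L_{k+1}(n+1)$, $\Psi(1,k,h_2,h_3,\dots)=(1,k+1,h_2,h_3,\dots)$, and check it is well-defined with no arithmetic assumption. The only instance of Macaulay's condition (C) that changes when $h_1=k$ is replaced by $h_1=k+1$ is the bound on $h_2$; and since $h_2\le k^{\langle 1\rangle}=\binom{k+1}{2}\le\binom{k+2}{2}=(k+1)^{\langle 1\rangle}$, the new vector still satisfies (C). The tail $h_2,h_3,\dots$ and all relations among those entries are untouched, the leading $1$ is preserved, and the total sum increases by exactly $1$, so the image indeed lies in $L_{k+1}(n+1)$. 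Since $\Psi$ alters only the second coordinate in a fixed way, it is visibly injective, whence $\ell_k(n)=|L_k(n)|\le|L_{k+1}(n+1)|=\ell_{k+1}(n+1)$.

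If one prefers to lean on the proposition verbatim rather than re-using its internal map, I would instead split into cases: when $\binom{k+2}{2}\ge n$, Proposition \ref{prop:tailseq} gives $\ell_{k+1}(n+1)=\ell_k(n)$ outright; and when $\binom{k+2}{2}<n$, the injectivity of $\Psi$ observed within that proof's argument still yields $\ell_k(n)\le\ell_{k+1}(n+1)$. Either way, there is essentially no obstacle — the single point requiring a sentence of care is the observation that the hypothesis $\binom{k+2}{2}\ge n$ is used only to get surjectivity, not well-definedness, of $\Psi$.
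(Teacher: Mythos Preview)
Your proposal is correct and matches the paper's own proof essentially verbatim: the paper's one-line argument is precisely that $\Psi$ from Proposition~\ref{prop:tailseq} is injective for all $n,k\ge 1$, and the paper already observed (before invoking the hypothesis) that $\Psi$ is well-defined and one-to-one unconditionally. Your elaboration on why only surjectivity needs the hypothesis is accurate and simply makes explicit what the paper leaves implicit.
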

\begin{proof}
	Follows from the fact that $\Psi$ as defined in Proposition \ref{prop:tailseq} is injective.
\end{proof}

Finding a recurrence relation for the sequence $\left\{\ell(n)\right\}_{n\gs 1}$ appears to be difficult.  However, Proposition \ref{prop:tailseq} allows us to give a recursion formula for a sequence giving lower bound of $\ell(n)$.  Notice that $\{L_k(n)\}_{k\gs 1}$ form a partition of $L(n)$, and therefore $\ds\ell(n)=\sum_{k\gs 1}\ell_k(n)$. Given the quadratic nature of $\ds \binom{k+2}{2}$ versus the linear nature of $n$, we find that the recursion listed in Proposition \ref{prop:tailseq} represents the ``tail'' of the summation $\ds \sum_{k\gs 1}\ell_k(n)$. In particular, let $\ds s(n) = \min\left\{k \st n \ls \binom{k+2}{2} \right\}$ and define the sequence 
\[
	\tau(n) = \sum_{k = s(n)}^{n-1} \ell_k (n).
\]

\begin{cor} 
	The sequence $\tau(n)$ is a lower bound of $\ell(n)$ and is defined by the following recurrence relation:
	\begin{align*}
	\tau(1) &= 1; \\
	\tau(n) &= \begin{cases}
					\tau(n-1) + \ell_{s(n)}(n) & \text{if } s(n) = s(n-1)\\
					\tau(n-1) & \text{if } s(n) > s(n-1)
	\end{cases} \text{ for } n \gs 2.
	\end{align*}
\end{cor}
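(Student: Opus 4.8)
The plan is to establish two things: first, that $\tau(n)$ is a lower bound for $\ell(n)$, and second, that it satisfies the stated recurrence. The lower bound is immediate from the definitions: since $\{L_k(n)\}_{k\gs 1}$ partitions $L(n)$, we have $\ell(n) = \sum_{k\gs 1}\ell_k(n)$, and $\tau(n) = \sum_{k=s(n)}^{n-1}\ell_k(n)$ is a sub-sum of nonnegative terms, so $\tau(n) \ls \ell(n)$. (One should note that the upper limit $n-1$ loses nothing for the full sum since an $h$-vector of length $n$ with $h_1 = k$ has $k \ls n-1$ when $n \gs 2$, and for $n=1$ the single $h$-vector is $(1)$ giving $\ell(1)=\tau(1)=1$.)

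For the recurrence, the key tool is Proposition~\ref{prop:tailseq}: whenever $\binom{k+2}{2}\gs n$, we have $\ell_{k+1}(n+1) = \ell_k(n)$. The first step is to record how $s(n)$ behaves: by definition $s(n) = \min\{k : n\ls\binom{k+2}{2}\}$, so $s$ is (weakly) increasing, changes by at most a controlled amount, and in fact $s(n) \ls s(n-1)+1$ — though what I really need is just that $s(n)\in\{s(n-1), s(n-1)+1,\dots\}$ together with $s(n)\gs s(n-1)$. The second step is the reindexing: for each $k$ with $s(n-1)\ls k \ls n-2$, the hypothesis $\binom{k+2}{2}\gs n-1$ holds (since $k\gs s(n-1)$ means $n-1\ls\binom{k+2}{2}$), so Proposition~\ref{prop:tailseq} gives $\ell_{k+1}(n) = \ell_k(n-1)$. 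Summing over $k$ from $s(n-1)$ to $n-2$,
\[
	\sum_{k=s(n-1)}^{n-2}\ell_k(n-1) = \sum_{j=s(n-1)+1}^{n-1}\ell_j(n),
\]
i.e. $\tau(n-1) = \sum_{j=s(n-1)+1}^{n-1}\ell_j(n)$.

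Now I compare this with $\tau(n) = \sum_{j=s(n)}^{n-1}\ell_j(n)$. If $s(n) = s(n-1)+1$ (the case $s(n) > s(n-1)$, where the increment must in fact be exactly $1$ by a short argument on the quadratic growth of $\binom{k+2}{2}$), then the two sums have identical ranges and $\tau(n) = \tau(n-1)$. If $s(n) = s(n-1)$, then $\tau(n)$ has exactly one extra term at $j = s(n)$, namely $\ell_{s(n)}(n)$, so $\tau(n) = \tau(n-1) + \ell_{s(n)}(n)$. This matches the claimed recurrence, and $\tau(1)=1$ since $L(1)=\{(1)\}$ and $s(1)=0$... here one must be slightly careful that $s(1)$ is the smallest $k$ with $1\ls\binom{k+2}{2}$, which is $k=0$, and $\ell_0(1)=1$ counts the empty-tail $h$-vector $(1)$; the sum $\sum_{k=0}^{0}\ell_k(1) = 1$. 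The main obstacle is really just the bookkeeping at the boundary: verifying that $s(n) - s(n-1) \in \{0,1\}$ (so the two displayed cases are exhaustive), and pinning down the base case indexing so that $\tau(1)=1$ comes out correctly. Both are routine once the growth rate $\binom{k+2}{2} - \binom{k+1}{2} = k+1$ is compared against the unit increments in $n$.
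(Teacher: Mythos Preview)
Your proof is correct and follows essentially the same approach as the paper: both use Proposition~\ref{prop:tailseq} to reindex $\tau(n-1)=\sum_{k=s(n-1)}^{n-2}\ell_k(n-1)$ as $\sum_{j=s(n-1)+1}^{n-1}\ell_j(n)$ and then compare index ranges with $\tau(n)$ according to whether $s(n)=s(n-1)$ or $s(n)=s(n-1)+1$. You are a bit more careful than the paper about the exhaustiveness of the two cases and the base-case bookkeeping, but the core argument is identical.
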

\begin{proof}
It is clear that $\tau(n)$ is a lower bound of $\ell(n)$. If $s(n) = s(n-1)$, then by Proposition \ref{prop:tailseq}, 
\[
	\tau(n-1) = \sum_{k = s(n-1)}^{n-2} \ell_k(n-1) = \sum_{k = s(n)}^{n-2} \ell_{k+1}(n) = \sum_{k = s(n) + 1}^{n-1} \ell_{k}(n).
\]
Hence $\tau(n) = \ell_{s(n)}(n) + \tau(n-1)$. If $s(n) > s(n-1)$, then $s(n) = s(n-1) +1$. Once again, by Proposition \ref{prop:tailseq}, 
\[
	\tau(n-1) = \sum_{k = s(n-1)}^{n-2} \ell_k(n-1) = \sum_{k = s(n-1)}^{n-2} \ell_{k+1}(n) = \sum_{k = s(n)}^{n-1} \ell_{k}(n).
\]
Therefore we have $\tau(n) = \tau(n-1)$.
\end{proof}

\section{Further Directions}

	As noted in Section \ref{sec:int-part}, $L_2(n)$ is a set whose cardinality represents the number of integer partitions of $n$ into distinct parts. This was obtained by restricting to elements of $L(n)$ whose first two entries are 1,2.  These are also the same $h$-vectors defined by 0-dimensional rings of the form $\kk[x,y]/I$ where $I$ is a graded ideal in $\kk[x,y]$.  In Section \ref{sec:more-prop}, this result was generalized with the sequences $|L_k(n)|$. Here the $L_k(n)$ are defined by $h$-vectors defined by 0-dimensional rings of the form $\kk[\lst x k]/I$.  This raises the following question: What algebraic conditions $\mathfrak C$ give rise to sequences with interesting counting properties? 

\begin{defn}\label{def:hseq}
 	The \dfn{$h$-sequence} of a condition $\mathfrak C$ is the sequence whose $n^{th}$ term is the number of $h$-vectors of length $n$ that satisfy $\mathfrak C$.  	
\end{defn}

  One of the fundamental properties a 0-dimensional $\kk$-algebra could have is the weak Lefschetz property (WLP).  This property is geometric in origin, and is a current topic of study in algebra and combinatorics.  As shown in \cite[Proposition 3.5]{harima03}, given an integer vector $\bb h = (1, \lst h s)$, $\bb h$ is the $h$-vector of a graded 0-dimensional $\kk$-algebra having the WLP if and only if $\bb h$ is a unimodal $h$-vector such that the positive part of the first difference is also an $h$-vector. Thus if we let $\mathfrak C$ be the WLP, we are able to compute the $h$-sequence of $\mathfrak C$.  We list this sequence in Figure \ref{fig:h-seq} along with some other interesting conditions.  Apart from the sequence $L_2(n)$, none of these sequences are found on the on-line encyclopedia of integer sequences \cite{oeis}.

\begin{figure}[h!]
	\begin{tabular}{r|ccccccccc}
		$\mathfrak C$ 	&	$\ell(n)$	&	 WLP 	&	 Unimodal 	&	 Symmetric 	&	 $L_2(n)$ 	&	 $L_3(n)$ 	&	 $L_4(n)$ 	&	 $L_5(n)$ 	\\
				\hline																	
				\hline																	
				1 	&	1	&	1	&	 1 	&	 1 	&	0	&	0	&	0	&	0	\\
				2 	&	1	&	1	&	 1 	&	 1 	&	0	&	0	&	0	&	0	\\
				3 	&	2	&	2	&	 2 	&	 1 	&	1	&	0	&	0	&	0	\\
				4 	&	3	&	3	&	 3 	&	 2 	&	1	&	1	&	0	&	0	\\
				5 	&	5	&	5	&	 5 	&	 2 	&	2	&	1	&	1	&	0	\\
				6 	&	8	&	8	&	 8 	&	 3 	&	3	&	2	&	1	&	1	\\
				7 	&	12	&	12	&	 12 	&	 2 	&	4	&	3	&	2	&	1	\\
				8 	&	18	&	18	&	 18 	&	 4 	&	5	&	5	&	3	&	2	\\
				9 	&	27	&	27	&	 27 	&	 3 	&	7	&	7	&	5	&	3	\\
				10 	&	40	&	40	&	 40 	&	 4 	&	9	&	11	&	7	&	5	\\
				11 	&	57	&	56	&	 56 	&	 3 	&	11	&	15	&	11	&	7	\\
				12 	&	82	&	80	&	 80 	&	 6 	&	14	&	21	&	16	&	11	\\
				13 	&	116	&	112	&	 112 	&	 4 	&	17	&	29	&	23	&	16	\\
				14 	&	163	&	155	&	 155 	&	 7 	&	21	&	39	&	33	&	23	\\
				15 	&	227	&	213	&	 213 	&	 4 	&	26	&	52	&	46	&	33	\\
				16 	&	313	&	290	&	 290 	&	 8 	&	31	&	70	&	63	&	46	\\
				17 	&	428	&	389	&	 390 	&	 5 	&	37	&	91	&	87	&	64	\\
				18 	&	583	&	522	&	 523 	&	 10 	&	45	&	119	&	117	&	89	\\
				19 	&	788	&	694	&	 696 	&	 5 	&	53	&	155	&	157	&	121	\\
				20 	&	1059	&	915	&	 920 	&	 13 	&	63	&	199	&	210	&	164	
	\end{tabular}
	\caption{$h$-sequences of various conditions}\label{fig:h-seq}
\end{figure}

\section{Acknowledgements}

We would like to thank Craig Huneke for the initial motivation for the problem.  Additionally, the calculations in this note were inspired by many Macaulay2 \cite{M2} computations.  The interested reader should contact the authors if they would like Macaulay2 code for investigating these types of objects further.


\begin{bibdiv}
\begin{biblist}

\bib{BH}{book}{
      author={Bruns, Winfried},
      author={Herzog, J{\"u}rgen},
       title={Cohen-{M}acaulay rings},
      series={Cambridge Studies in Advanced Mathematics},
   publisher={Cambridge University Press},
     address={Cambridge},
        date={1993},
      volume={39},
        ISBN={0-521-41068-1},
      review={\MR{1251956 (95h:13020)}},
}

\bib{manoj}{article}{
      author={Caviglia, Giulio},
      author={Kummini, Manoj},
       title={Poset embeddings of {H}ilbert functions},
        date={2012},
     journal={ar{X}iv:1009.4488},
}

\bib{CL}{article}{
      author={Clements, G.~F.},
      author={Lindstr{\"o}m, B.},
       title={A generalization of a combinatorial theorem of {M}acaulay},
        date={1969},
     journal={J. Combinatorial Theory},
      volume={7},
       pages={230\ndash 238},
      review={\MR{0246781 (40 \#50)}},
}

\bib{FR}{incollection}{
      author={Francisco, Christopher~A.},
      author={Richert, Benjamin~P.},
       title={Lex-plus-powers ideals},
        date={2007},
   booktitle={Syzygies and {H}ilbert functions},
      series={Lect. Notes Pure Appl. Math.},
      volume={254},
   publisher={Chapman \& Hall/CRC, Boca Raton, FL},
       pages={113\ndash 144},
         url={http://dx.doi.org/10.1201/9781420050912.ch4},
      review={\MR{2309928 (2008a:13015)}},
}

\bib{M2}{misc}{
      author={Grayson, Daniel~R.},
       title={Macaulay2, a software system for research in algebraic geometry},
     address={Available at \url{http://www.math.uiuc.edu/Macaulay2/}},
}

\bib{harima03}{article}{
      author={Harima, Tadahito},
      author={Migliore, Juan~C.},
      author={Nagel, Uwe},
      author={Watanabe, Junzo},
       title={The weak and strong {L}efschetz properties for {A}rtinian
  {$K$}-algebras},
        date={2003},
        ISSN={0021-8693},
     journal={J. Algebra},
      volume={262},
      number={1},
       pages={99\ndash 126},
         url={http://dx.doi.org/10.1016/S0021-8693(03)00038-3},
      review={\MR{1970804 (2004b:13001)}},
}

\bib{linusson99}{article}{
      author={Linusson, Svante},
       title={The number of {$M$}-sequences and {$f$}-vectors},
        date={1999},
        ISSN={0209-9683},
     journal={Combinatorica},
      volume={19},
      number={2},
       pages={255\ndash 266},
         url={http://dx.doi.org/10.1007/s004930050055},
      review={\MR{1723043 (2000k:05012)}},
}

\bib{macaulay27}{article}{
      author={MacAulay, F.~S.},
       title={Some {P}roperties of {E}numeration in the {T}heory of {M}odular
  {S}ystems},
        date={1927},
        ISSN={0024-6115},
     journal={Proc. London Math. Soc.},
      volume={S2-26},
      number={1},
       pages={531},
         url={http://dx.doi.org/10.1112/plms/s2-26.1.531},
      review={\MR{1576950}},
}

\bib{MM}{article}{
      author={Mermin, Jeff},
      author={Murai, Satoshi},
       title={Betti numbers of lex ideals over some {M}acaulay-{L}ex rings},
        date={2010},
        ISSN={0925-9899},
     journal={J. Algebraic Combin.},
      volume={31},
      number={2},
       pages={299\ndash 318},
         url={http://dx.doi.org/10.1007/s10801-009-0192-1},
      review={\MR{2592080 (2011b:13066)}},
}

\bib{oeis}{article}{
       title={The on-line encyclopedia of integer sequences},
        date={2013},
     journal={Published electronically},
        note={http://oeis.org},
}

\bib{PS}{article}{
      author={Peeva, Irena},
      author={Stillman, Mike},
       title={Open problems on syzygies and {H}ilbert functions},
        date={2009},
        ISSN={1939-0807},
     journal={J. Commut. Algebra},
      volume={1},
      number={1},
       pages={159\ndash 195},
         url={http://dx.doi.org/10.1216/JCA-2009-1-1-159},
      review={\MR{2462384 (2009i:13024)}},
}

\bib{snellman04}{article}{
      author={Snellman, Jan},
      author={Paulsen, Michael},
       title={Enumeration of concave integer partitions},
        date={2004},
        ISSN={1530-7638},
     journal={J. Integer Seq.},
      volume={7},
      number={1},
       pages={Article 04.1.3, 10},
      review={\MR{2049698}},
}

\bib{gwyn}{misc}{
      author={Whieldon, Gwyneth~R.},
       title={Infinite free resolutions over monomial rings in two variables},
        date={2013},
      volume={math.AC},
        note={arXiv:1308.0179},
}

\end{biblist}
\end{bibdiv}

\end{document}